 \newtheorem{remark}{Remark}
 \newtheorem{theorem}[remark]{Theorem}
 \newtheorem{corollary}[remark]{Corollary}
\title{The Hosoya polynomial of distance-regular graphs}
\author{Emeric Deutsch\\
Polytechnic Institute of New York University,
United States\\
emericdeutsch\@@msn.com
\\
\\
Juan A. Rodr\'{\i}guez-Vel\'{a}zquez
\\
 Departament d'Enginyeria Inform\`atica i Matem\`atiques
\\
 Universitat Rovira i Virgili\\
   Av. Pa\"{\i}sos Catalans 26,
43007 Tarragona, Spain.
\\ juanalberto.rodriguez\@@urv.cat}
\date{}
\begin{document}
\maketitle

\begin{abstract}
In this note we obtain an explicit formula for the Hosoya polynomial of any distance-regular  graph in terms of its intersection array. As a consequence, we obtain a very simple formula for the Hosoya polynomial of any strongly regular graph.
\end{abstract}

\section{Introduction}

Throughout this paper  $G=(V,E)$  denotes a connected,
simple and  finite  graph  with vertex set $V=V(G)$  and  edge set $E=E(G)$.    

The {\em distance} $d(u,v)$  between two vertices $u$ and
$v$ is the minimum  of the lengths of  paths between  $u$ and $v$.
The  {\em diameter} $D$ of  a graph $G$ is
defined as $$D:=\max_{u,v\in V(G)}\{d (u,v)\}.$$

The {\em Wiener index} $W(G)$ of a graph $G$ with vertex
set $\{v_1,v_2,...,v_n\}$,  defined as the sum of distances between
all pairs of vertices of $G$,
 $$W(G):= \frac{1}{2}\sum_{i=1,j=1}^nd(v_i,v_j),$$
 is the first mathematical invariant reflecting the
topological structure of a molecular graph.

This topological index  has been extensively studied; for
instance, a comprehensive survey on the direct calculation,
applications, and the relation of the Wiener index of trees with
other parameters of graphs can be found in \cite{Dobrynin2001}.
Moreover, a list of 120 references of the main works on the Wiener
index of graphs can be found in the referred survey.

The Hosoya polynomial of a graph was introduced in the Hosoya's seminal paper \cite{Hosoya1988239}
in 1988 and received a lot of attention afterwards. The polynomial was later
independently introduced and considered by Sagan, Yeh, and Zhang \cite{BruceEtAll1996} under the
name Wiener polynomial of a graph. Both names are still used for the polynomial
but the term Hosoya polynomial is nowadays used by the majority of researchers.
The main advantage of the Hosoya polynomial is that it contains a wealth of information about distance based graph invariants. For instance, knowing the Hosoya
polynomial of a graph, it is straightforward to determine the  Wiener index
of a graph as the first derivative of the polynomial at the point $t=1$. Cash \cite{Cash2002893} noticed that
the hyper-Wiener index can be obtained from the Hosoya polynomial in a similar simple
manner. Also, Estrada et al. \cite{EstradaGutman1998} studied several chemical applications of the
Hosoya polynomial.

Let $G$ be a connected graph of diameter $D$ and let $d(G, k)$, $k \ge  0$, be the number of vertex
pairs at distance $k$. The \textit{Hosoya polynomial} of $G$ is defined as
$$H(G, t) :=\sum_{k=1}^D
d(G, k)\cdot  t^k. $$
As we pointed out above, the Wiener index
of a graph $G$ is determined as the first derivative of the polynomial $ H(G, t)$ at $t=1$, \textit{i.e}., 
$$W(G)=\sum_{k=1}^D
 k \cdot d(G, k). $$

The Hosoya polynomial has been obtained for trees, composite graphs,
benzenoid graphs, tori, zig-zag open-ended nanotubes, certain graph
decorations, armchair open-ended nanotubes, zigzag polyhex nanotorus,
$TUC_4C_8(S)$ nanotubes, pentachains, polyphenyl chains, the
circumcoronene series, Fibonacci and Lucas cubes, Hanoi graphs, etc.
See the references in \cite{Deutsch-Klavzar2013}.

In this note we obtain an explicit formula for the Hosoya polynomial of any distance-regular  graph. As a consequence, we obtain a very simple formula for the Hosoya polynomial of any strongly regular graph.

\section{The Hosoya polynomial of distance-regular graphs}

A \textit{distance-regular}  graph is  a regular connected
 graph with  diameter $D$, for which the following holds. There
are natural numbers $b_0,b_1,...,b_{D-1}$, $c_1=1$,
$c_2,...,c_{D}$ such that for each pair $(u,v)$ of vertices
satisfying $d (u,v)=j$ we have

\begin{itemize}
\item[(1)] the number of vertices in $G_{j-1}(v)$ adjacent to
$u$ is $c_j$  $(1\le j \le D)$,
\item[(2)] the number of vertices in $G_{j+1}(v)$ adjacent to
$u$ is $b_j$  $(0\le j \le D-1),$
\end{itemize}
where $G_i(v)=\{u\in V(G) : d(u,v)=i\}$.

The array $\{b_0,b_1,...,b_{D-1};c_1=1,c_2,...,c_{D}\} $ is the \textit{intersection array} of $G$.


Classes of distance-regular graphs include complete graphs, cycle graphs, Hadamard graphs, hypercube graphs, Kneser graphs $K(n,2)$, odd graphs  and Platonic graphs \cite{Biggs1974,Brouwer1989}.

\begin{theorem} \label{HosoyaPolynomialDist-Reg}
Let $G$ be a  distance-regular  graph whose
intersection array is
$$\{b_0,b_1,...,b_{D-1};c_1=1,c_2,...,c_{D}\}.$$ Then we have
\[
H(G,t)=\frac{nb_0}{2}\left(
t+\sum_{i=2}^{D}\frac{\prod_{j=1}^{i-1}b_j}{\prod_{j=2}^{i}c_j} \cdot t^i
\right).
\]
\end{theorem}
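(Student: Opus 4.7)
The plan is to express $H(G,t)=\sum_{k=1}^{D}d(G,k)t^k$ by counting, for each $k$, the quantity $d(G,k)$ via a fixed vertex. The key observation is that in a distance-regular graph the cardinality $k_i:=|G_i(v)|$ is independent of the vertex $v$: indeed, the intersection array conditions already imply that the distance distribution around any vertex is governed by the same parameters, so picking any $v$ and then summing over all $v$ gives $d(G,k)=\tfrac{n}{2}k_k$. Thus the whole problem reduces to computing $k_i$ in closed form in terms of the $b_j$'s and $c_j$'s.

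To obtain a formula for $k_i$, I would use a standard double-counting argument on the edges joining $G_{i-1}(v)$ and $G_i(v)$. By condition (2) each vertex of $G_{i-1}(v)$ contributes exactly $b_{i-1}$ such edges, and by condition (1) each vertex of $G_i(v)$ contributes exactly $c_i$ such edges. Therefore
\[
k_{i-1}\,b_{i-1}=k_i\,c_i \qquad (1\le i\le D).
\]
Starting from $k_0=1$ and iterating this recursion yields
\[
k_i=\frac{b_0 b_1\cdots b_{i-1}}{c_1 c_2\cdots c_i},
\]
which can be proved formally by a short induction on $i$.

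Finally, I would substitute $d(G,k)=\tfrac{n}{2}k_k$ into the definition of $H(G,t)$. Using $c_1=1$ we have $k_1=b_0$, and for $i\ge 2$ we may factor out $b_0$ from the numerator and $c_1=1$ from the denominator to rewrite
\[
k_i=b_0\cdot\frac{\prod_{j=1}^{i-1}b_j}{\prod_{j=2}^{i}c_j}.
\]
Pulling the common factor $\tfrac{nb_0}{2}$ out of the sum then produces exactly the claimed expression, separating the $i=1$ term (which contributes $t$) from the $i\ge 2$ terms (which contribute the displayed sum).

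There is no serious obstacle here: the argument is essentially the classical derivation of the distance distribution of a distance-regular graph from its intersection array, repackaged as a generating function. The only point that deserves care is confirming that $k_i$ is well defined (that is, that every vertex really does see the same number of vertices at each distance), but this follows immediately by induction on $i$ from the defining conditions (1) and (2), so it can be dispatched in a single sentence at the start of the proof.
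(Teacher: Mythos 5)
Your proposal is correct and follows essentially the same route as the paper: double-counting the edges between $G_{i-1}(v)$ and $G_i(v)$ to get $|G_{i-1}(v)|\,b_{i-1}=|G_i(v)|\,c_i$, solving the recursion for $|G_i(v)|$, and then summing $d(G,i)=\tfrac{n}{2}|G_i(v)|$ into the definition of $H(G,t)$. No substantive differences to report.
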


\begin{proof}
For any vertex $v\in V(G)$, each vertex of $G_{i-1}(v)$ is joined to $b_{i-1}$ vertices in $G_{i}(v)$
and each vertex of $G_{i}(v)$ is joined to $c_i$ vertices
in $G_{i-1}(v)$. Thus
\begin{equation} \label{numbertex}
\mid G_{i-1}(v)\mid b_{i-1}=\mid G_{i}(v) \mid c_i.
\end{equation}
Hence, it follows from (\ref{numbertex}) that the number of
vertices at distance $i$ of a vertex $v$, namely $|G_i(v)|$, is
obtained directly from the intersection array
\begin{equation} \label{vertexdist}
|G_i(v)|=\frac{\prod_{j=0}^{i-1}b_j}{\prod_{j=2}^{i}c_j}
 \quad (2\le i \le D) \quad \mbox{\rm and} \quad
 |G_1(v)|=b_0.
 \end{equation}
 Now, since, $d(G,i)=\frac{1}{2}\displaystyle\sum_{v\in V(G)}|G_1(v)|$ and the value $|G_1(v)|$ does not depend on $v$, we obtain the following:
 \begin{equation} \label{vertexdist}
d(G,i)=\frac{n\prod_{j=0}^{i-1}b_j}{2\prod_{j=2}^{i}c_j}
 \quad (2\le i \le D) \quad \mbox{\rm and} \quad
 |G_1(v)|=\frac{nb_0}{2}.
 \end{equation}
 Therefore, the result is a direct consequence of the
 definition of the Hosoya polynomial.
\end{proof}

As an example, the hypercubes  $Q_k$, $k\ge 2$, are distance-regular graphs whose intersection array is
$\{k,k-1,...,1;1,2,...,k\} $, \cite{Biggs1974}. Thus, from Theorem
\ref{HosoyaPolynomialDist-Reg} we obtain that the Hosoya polynomial of the
hypercube $Q_k$ is

$$H(Q_k,t)=2^{k-1}\sum_{i=1}^k \binom{k}{i}t^i=2^{k-1}\left((t+1)^k-1\right).$$

As a direct consequence of Theorem \ref{HosoyaPolynomialDist-Reg} we deduce the  formula on the Wiener index of a distance-regular  graph, which was previously obtained in \cite{Rodriguez2005} for the general case of hypergraphs.

\begin{corollary}{\rm \cite{Rodriguez2005}} \label{wienerdist-reg}
Let $G$ be a  distance-regular  graph whose
intersection array is
$$\{b_0,b_1,...,b_{D-1};c_1=1,c_2,...,c_{D}\}.$$ Then we have
\[
W(G)=\frac{nb_0}{2}\left(
1+\sum_{i=2}^{D}i\frac{\prod_{j=1}^{i-1}b_j}{\prod_{j=2}^{i}c_j}
\right).
\]
\end{corollary}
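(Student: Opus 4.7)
My plan is to derive the corollary as an immediate consequence of Theorem \ref{HosoyaPolynomialDist-Reg}, exploiting the basic identity $W(G)=H'(G,1)$ already recalled in the introduction. Since Theorem \ref{HosoyaPolynomialDist-Reg} furnishes a closed form for $H(G,t)$ purely in terms of the intersection array, all that remains is a formal differentiation followed by evaluation at $t=1$.

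Concretely, I would take
\[
H(G,t)=\frac{nb_0}{2}\left(
t+\sum_{i=2}^{D}\frac{\prod_{j=1}^{i-1}b_j}{\prod_{j=2}^{i}c_j}\,t^i
\right),
\]
differentiate term by term, and observe that the linear summand $t$ contributes $1$ while each $t^i$ summand contributes $i\,t^{i-1}$. Setting $t=1$ then collapses every power of $t$ to $1$ and produces exactly the right-hand side of the claimed identity, with no regrouping of the ratios $\prod_{j=1}^{i-1}b_j/\prod_{j=2}^{i}c_j$ needed.

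There is essentially no obstacle here: the only thing that could go wrong is a bookkeeping slip in the product indices, but those are inherited verbatim from the theorem. As an alternative (or as a sanity check), one could bypass the Hosoya polynomial altogether, substituting the expression for $d(G,i)$ derived inside the proof of Theorem \ref{HosoyaPolynomialDist-Reg} directly into $W(G)=\sum_{k=1}^{D}k\cdot d(G,k)$; this yields the same formula via the same elementary manipulation. For brevity I would present the differentiation route, since it reduces the corollary to a single line.
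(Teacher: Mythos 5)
Your proposal is correct and matches the paper's route: the paper presents this corollary as a direct consequence of Theorem \ref{HosoyaPolynomialDist-Reg} via the relation $W(G)=H'(G,1)$ recalled in the introduction, which is precisely the one-line differentiation you carry out.
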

 A graph is said to be $k$-\textit{regular}   if all vertices have the same degree $k$. A   $k$-regular graph $G$ of order $n$ is said to be \textit{strongly regular}, with parameters $(n,k,\lambda,\mu)$, if the following conditions hold. Each pair of adjacent vertices has the same number $\lambda\ge 0$ of common neighbours, and each pair of non-adjacent vertices has the same number $\mu\ge 1$ of common neighbours (see, for instance, \cite{Biggs1974}). A distance-regular graph  of diameter $D=2$ is simply a strongly regular graph. In terms of the intersection array $\{b_0,b_1;1,c_2\}$ we have that $\lambda=k-1-b_1$ and $\mu=c_2$, \textit{i.e}., the intersection array of any strongly regular graph with parameters $(n,k,\lambda,\mu)$ is $\{k,k-\lambda-1;1,\mu\}$.  
Thus, as a consequence of Theorem \ref{HosoyaPolynomialDist-Reg} we deduce the following result.

\begin{corollary} \label{HosoyaPolynomialStronglyRegGraph}
Let $G$ be a  strongly regular graph with parameters $(n,k,\lambda,\mu)$.  Then we have
\[
H(G,t)=\frac{nk}{2}\left(
t+\frac{k-\lambda-1}{\mu} \cdot t^2
\right).
\]
\end{corollary}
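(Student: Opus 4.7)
The plan is to treat this as a direct specialization of Theorem~\ref{HosoyaPolynomialDist-Reg} to the case $D=2$, using the translation between strongly-regular parameters $(n,k,\lambda,\mu)$ and the intersection array already spelled out in the paragraph preceding the corollary.

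First, I would invoke the observation that a distance-regular graph of diameter $2$ is precisely a strongly regular graph, so the hypothesis of Theorem~\ref{HosoyaPolynomialDist-Reg} is satisfied with $D=2$. Next, I would read off the intersection array: $b_0=k$, $b_1=k-\lambda-1$, $c_1=1$, $c_2=\mu$, which is the identification $\{k,k-\lambda-1;1,\mu\}$ justified in the text. After that, the formula from Theorem~\ref{HosoyaPolynomialDist-Reg} collapses to a sum with a single nontrivial term at $i=2$, namely
\[
\frac{\prod_{j=1}^{1}b_j}{\prod_{j=2}^{2}c_j}\;=\;\frac{b_1}{c_2}\;=\;\frac{k-\lambda-1}{\mu},
\]
so that $H(G,t)=\tfrac{nk}{2}\bigl(t+\tfrac{k-\lambda-1}{\mu}\,t^2\bigr)$, which is the claimed identity.

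There is essentially no obstacle here: the corollary is a one-line substitution, and the only subtlety is to make sure the $b_j$ and $c_j$ indices line up correctly in the $i=2$ term of the theorem's formula. The parameter dictionary $\lambda=k-1-b_1$ and $\mu=c_2$ is already verified in the surrounding discussion, so nothing new needs to be proved about strongly regular graphs. If I wanted a sanity check, I would note that $d(G,1)=\tfrac{nk}{2}$ is simply the number of edges of a $k$-regular graph on $n$ vertices, and $d(G,2)=\tfrac{nk(k-\lambda-1)}{2\mu}$ agrees with the standard counting of non-adjacent pairs of vertices in a strongly regular graph, confirming the coefficients.
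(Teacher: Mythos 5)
Your proposal is correct and matches the paper's approach exactly: the paper likewise derives the corollary by substituting the intersection array $\{k,k-\lambda-1;1,\mu\}$ into Theorem~\ref{HosoyaPolynomialDist-Reg} with $D=2$. Your sanity check on $d(G,1)$ and $d(G,2)$ is a nice bonus but not part of the paper's (essentially one-line) argument.
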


It is well-known that the parameters $(n,k,\lambda,\mu)$ of any strongly regular graph      are not independent and must obey the following relation:

    $$(v-k-1)\mu = k(k-\lambda-1).$$
 As a result, we can express the Hosoya polynomial of any strongly regular graph in the following  manner 
 \[
H(G,t)=\frac{n}{2} 
\left( kt+(n-k-1) t^2\right);
\]
this is not surprising because for every vertex $x$  
there are $k$ vertices at distance 1 from $x$ and $n-k-1$ at distance 2 (since a strongly
regular graph has diameter $D=2$).


\end{document}